\theoremstyle{plain}
\newtheorem{thm}{Theorem}[section]
\newtheorem{lem}{Lemma}[section]
\newtheorem{cor}{Corollary}[section]
\theoremstyle{definition}
\newtheorem{definition}{Definition}
\newtheorem{pr}{Question}
\begin{document}

\begin{center}\Large
\textbf{On Autonilpotent Finite Groups}
\normalsize

\smallskip
V.\,I. Murashka

 \{mvimath@yandex.ru\}

 Francisk Skorina Gomel State University, Gomel, Belarus\end{center}

\textbf{Abstract.} In the paper autonilpotent groups were characterized as groups $G$  such that $\mathrm{Aut}G$  stabilizes some chain of subgroups of $G$. It was shown that a $p$-group is autonilpotent if and only if its group of automorphisms is also a $p$-group.  Analogues of Baer's theorem about the hypercenter and Frobenius $p$-nilpotency criterion were obtained for autonilpotent groups.

%We shall say that a class of groups $\mathfrak{X}$ satisfies the Baer condition if the intersection of all $\mathfrak{X}$-maximal subgroups of a group $G$ is  the $\mathfrak{X}$-hypercenter of $G$. In the article we show that  a hereditary saturated formation $\mathfrak{F}$  satisfies the Baer condition if and only if   $\mathfrak{F}^*$  satisfies the Baer condition where $\mathfrak{F}^*$ is the class of all quasi-$\mathfrak{F}$-groups.

 \textbf{Keywords.} Finite groups; nilpotent groups; autonilpotent groups;   hypercenter of a group.

\textbf{AMS}(2010). 20D25, 20D45, 20D15.

\section{Introduction and results}

Throughout this paper and all groups are finite and   $G$ always denotes a finite group. Recall that $\mathrm{Aut}G$ and $\mathrm{Inn}G$  are the groups of all  and inner automorphisms of $G$ respectively.

Let $A$ be a group of automorphisms of a group $G$. Kaloujnine \cite{i0} and Hall \cite{i1}   showed that if $A$ stabilizes some chain of subgroups of $G$ then $A$ is nilpotent.  In this paper we will consider the converse question: assume that $A$ stabilizes some chain of subgroups of $G$, what can be said about $G?$

M. R. R. Moghaddam and M. A. Rostamyari \cite{a0} introduced the concept of autonilpotent group. Let   \[L_n(G)=\{ x\in G\,\,| \,\,[x, \alpha_1,\dots, \alpha_n]=1 \,\,\forall \alpha_1,\dots, \alpha_n\in\mathrm{Aut}G\}\]
Then $G$  is called autonilpotent if $G=L_n(G)$ for some natural $n$. Some properties of autonilpotent groups were studied in \cite{a1}.

%The connection of autonilpotent groups and Question \ref{q1} is shown in

\begin{thm}\label{th1}
A group $G$ is   autonilpotent if and only if  $\mathrm{Aut}G$ stabilizes some chain of subgroups of $G$.\end{thm}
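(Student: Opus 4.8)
The plan is to use the autocentral series $1 = L_0(G) \le L_1(G) \le \cdots$ as the witnessing chain for one direction, and to show conversely that any chain stabilized by $\mathrm{Aut}G$ lies inside this series term by term. (Here, as usual, $\mathrm{Aut}G$ \emph{stabilizes} a chain $1 = G_0 \le \cdots \le G_m = G$ means $[G_i,\mathrm{Aut}G] \le G_{i-1}$ for every $i$, i.e. $\mathrm{Aut}G$ induces the identity on each factor $G_i/G_{i-1}$.) The first thing I would establish, by induction on $n$, is that each $L_n(G)$ is a characteristic subgroup with $L_{n-1}(G) \le L_n(G)$. The clean way is to pass to the quotient: since $L_{n-1}(G)$ is characteristic, every $\alpha \in \mathrm{Aut}G$ induces an automorphism $\bar\alpha$ of $\bar G = G/L_{n-1}(G)$, and from $\overline{[x,\alpha]} = [\bar x,\bar\alpha]$ one sees that $L_n(G)/L_{n-1}(G)$ is precisely the set of elements of $\bar G$ fixed by all induced $\bar\alpha$; as an intersection of fixed-point subgroups this is a subgroup, so $L_n(G)$ is one. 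Replacing $\alpha$ by $\beta^{-1}\alpha\beta$ and using $[\beta(x),\alpha] = \beta([x,\beta^{-1}\alpha\beta])$ shows it is characteristic. I would also record the recursive description $L_n(G) = \{x \in G : [x,\alpha] \in L_{n-1}(G)\ \forall \alpha \in \mathrm{Aut}G\}$, immediate from $[x,\alpha_1,\dots,\alpha_n] = [[x,\alpha_1],\alpha_2,\dots,\alpha_n]$.

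For the forward direction, assume $G$ is autonilpotent, so $G = L_n(G)$ for some $n$. Then $1 = L_0(G) \le L_1(G) \le \cdots \le L_n(G) = G$ is a chain of subgroups, and for $x \in L_i(G)$ and any $\alpha$ the recursive description gives $[x,\alpha] = x^{-1}\alpha(x) \in L_{i-1}(G)$, i.e. $\alpha$ fixes $xL_{i-1}(G)$. Hence $\mathrm{Aut}G$ acts trivially on every factor, which is exactly the assertion that it stabilizes this chain.

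For the converse, suppose $\mathrm{Aut}G$ stabilizes a chain $1 = G_0 \le \cdots \le G_m = G$, so $[g,\alpha] \in G_{i-1}$ whenever $g \in G_i$ and $\alpha \in \mathrm{Aut}G$. I would prove $G_i \le L_i(G)$ by induction on $i$: the base case $G_0 = 1 = L_0(G)$ is trivial, and if $G_{i-1} \le L_{i-1}(G)$ then for $g \in G_i$ we have $[g,\alpha] \in G_{i-1} \le L_{i-1}(G)$ for all $\alpha$, whence $g \in L_i(G)$ by the recursive description. Taking $i = m$ yields $G = G_m \le L_m(G) \le G$, so $G = L_m(G)$ and $G$ is autonilpotent.

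I expect no serious obstacle here; the only point needing care is the bookkeeping that turns the $L_n(G)$ into an honest ascending chain of characteristic subgroups, so that both ``chain of subgroups'' and the induced action on the quotients $\bar G$ are meaningful, together with keeping the commutator convention $[x,\alpha] = x^{-1}\alpha(x)$ consistent throughout. Once that is in place, each direction is a direct translation between ``$\mathrm{Aut}G$ acts trivially on each factor'' and the defining membership conditions for the $L_n(G)$.
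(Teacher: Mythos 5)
Your proof is correct, but it takes a genuinely different route from the paper's. The paper never uses the ascending series $L_n(G)$ as the witnessing chain: it works in the general setting of an operator group $R$, defines the descending series $K_0(G,R)=G$, $K_n(G,R)=[K_{n-1}(G,R),R]$, and proves (Theorem 3.1) that $R$ stabilizes some chain of subgroups if and only if $K_n(G,R)=1$ for some $n$ --- the stabilized chain in the forward direction is the $K$-series itself, and the converse is the top-down induction $K_i(G,R)\le G_i$. The bridge back to the definition of autonilpotency is Lemma 3.1(2), which shows $L_n(G,R)=G$ if and only if $K_n(G,R)=1$; that lemma is where the paper proves $L_n$ is a (normal) subgroup, via the commutator identity $[xy,\alpha]=[x,\alpha]^y[y,\alpha]$ under a normalization hypothesis on $\mathrm{Inn}G$. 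You instead stay entirely on the ascending side: your witnessing chain is $1=L_0(G)\le\cdots\le L_n(G)=G$, your converse is the dual, bottom-up induction $G_i\le L_i(G)$, and you never need the $K$-series or the $L$/$K$ equivalence at all. Your verification that each $L_n(G)$ is a characteristic subgroup (as the full preimage of the fixed-point subgroup of the induced automorphisms of $G/L_{n-1}(G)$, plus the identity $[\beta(x),\alpha]=\beta([x,\beta^{-1}\alpha\beta])$) also replaces the paper's commutator computation and is arguably cleaner. What each approach buys: yours is shorter and self-contained for this one theorem, since for $R=\mathrm{Aut}G$ the paper's normalization hypothesis is automatic; the paper's detour through operator groups and the $K$-series buys generality and reuse --- Theorem 3.1 holds for an arbitrary operator group $R$ (e.g.\ it yields $\mathrm{F}(G)$-nilpotency), and Lemma 3.1 with the $K$-series is invoked again in the proofs of Theorems 3.2--3.5, the Baer and Frobenius analogues, which is why the paper structures the argument that way.
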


 Recall that $\pi(G)$ is the set of prime divisors of $G$.  It is known that a group is nilpotent if and only if it is the direct product of its Sylow subgroups. Here we proved

 \begin{thm}\label{th1}
A group $G$ is   autonilpotent if and only if it is  the direct product of its Sylow subgroups and the automorphism group of a Sylow $p$-subgroup of $G$ is a $p$-group for all  $p\in\pi(G)$.\end{thm}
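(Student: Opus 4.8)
The plan is to exploit the classical fact that being the direct product of its Sylow subgroups is just a restatement of nilpotency, and then to reduce autonilpotency of a nilpotent group to autonilpotency of each Sylow factor, at which point the already-announced $p$-group criterion (a $p$-group is autonilpotent if and only if its automorphism group is a $p$-group) finishes the job. So the two nontrivial ingredients are: (i) every autonilpotent group is nilpotent, and (ii) for a nilpotent group the operator $L_n$ distributes over the Sylow decomposition.

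For (i) I would observe that since $\mathrm{Inn}G\le\mathrm{Aut}G$, the defining condition for $L_n(G)$ quantifies over a larger family of maps than the one defining the $n$-th term $Z_n(G)$ of the upper central series; specializing all the $\alpha_i$ to inner automorphisms, the relation $[x,\alpha_1,\dots,\alpha_n]=1$ becomes $[x,g_1,\dots,g_n]=1$, whence $L_n(G)\subseteq Z_n(G)$. Thus $G=L_n(G)$ forces $G=Z_n(G)$, so $G$ is nilpotent and therefore the direct product of its Sylow subgroups $\prod_{p\in\pi(G)}P_p$. (Equivalently, one may use the characterization of autonilpotent groups as those whose automorphism group stabilizes a chain of subgroups: restricting the stabilizing action to inner automorphisms turns that chain into a central series $[G_i,G]\le G_{i-1}$.)

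Then I would prove the decomposition. Each $P_p$ is the unique Sylow $p$-subgroup, hence characteristic, so every automorphism of $G$ restricts to each factor and $\mathrm{Aut}G=\prod_{p\in\pi(G)}\mathrm{Aut}(P_p)$. Writing $x=(x_p)_p$ and $\alpha=(\alpha_p)_p$, one checks componentwise that $[x,\alpha_1,\dots,\alpha_n]=\bigl([x_p,\alpha_{1,p},\dots,\alpha_{n,p}]\bigr)_p$; since the components $\alpha_{i,p}$ range independently over all of $\mathrm{Aut}(P_p)$, this yields $L_n(G)=\prod_{p\in\pi(G)}L_n(P_p)$. Consequently $G=L_n(G)$ for some $n$ if and only if $P_p=L_n(P_p)$ for every $p$, i.e. $G$ is autonilpotent if and only if each Sylow subgroup $P_p$ is autonilpotent. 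Combining this equivalence with the $p$-group criterion gives both directions of the theorem.

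I expect the main obstacle to be the clean justification of the distributive law $L_n(G)=\prod_p L_n(P_p)$: one must verify that the identification $\mathrm{Aut}G=\prod_p\mathrm{Aut}(P_p)$ genuinely lets the automorphisms act independently on the factors (this uses coprimality of the $|P_p|$, so that no automorphism mixes distinct Sylow subgroups), and that the single universal quantifier over $\mathrm{Aut}G$ in the definition of $L_n$ decomposes into independent universal quantifiers over the $\mathrm{Aut}(P_p)$. The nilpotency step (i) and the appeal to the $p$-group criterion are then routine.
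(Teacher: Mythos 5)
Your proof is correct, but it follows a genuinely different route from the paper's. The paper obtains this theorem as the specialization $R=\mathrm{Aut}G$ of its general theory of $R$-nilpotent groups: the heavy lifting is done by Theorem \ref{t3.5}, whose proof runs through the Baer-type theorem (Theorem \ref{t2.1}) and a Schmidt-subgroup argument (Theorem \ref{t3.4}), and the final step invokes the Bidwell--Curran--McCaughan description of automorphisms of direct products to identify $\mathrm{Aut}_{\mathrm{Aut}G}P$ with $\mathrm{Aut}P$. You bypass all of that intermediate machinery: nilpotency follows from $L_n(G)\subseteq Z_n(G)$; then, because the Sylow subgroups of a nilpotent group are characteristic and of coprime orders, $\mathrm{Aut}G=\prod_p\mathrm{Aut}P_p$ acting componentwise, which gives $L_n(G)=\prod_p L_n(P_p)$ and hence the equivalence of autonilpotency of $G$ with autonilpotency of every Sylow subgroup $P_p$; the $p$-group criterion then finishes both directions. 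Your argument is shorter and more elementary for this particular statement (in the coprime-order situation the decomposition of the automorphism group is elementary, whereas the paper cites the general result of \cite{adp}); what the paper's longer route buys is that its intermediate steps are main results in their own right (the analogues of Baer's hypercenter theorem and of the Frobenius criterion, Theorems \ref{t2} and \ref{t3}), proved for arbitrary operator groups $R$ with $\mathrm{Inn}G\leq\mathrm{Aut}_RG$. One point needs care: the $p$-group criterion you quote is stated in the paper as a corollary \emph{of the very theorem you are proving}, so citing it in that form would be circular; you must instead cite its independent proof, namely Theorem \ref{t1} applied with $R=\mathrm{Aut}G$ (where $\mathrm{Aut}_RG=\mathrm{Aut}G$ because $C_{\mathrm{Aut}G}(G)=1$), which the paper establishes before and independently of the statement at hand. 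With that citation your reduction is sound and complete.
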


 %\begin{thm}\label{th2}  A group      \end{thm}

In \cite{a2} all abelian autonilpotent groups were described.  In particular, abelian autonilpotent non-unit groups of odd order don't exist.  It was not known about the existence of autonilpotent $p$-groups of odd order.

\begin{cor} Let $p$ be a prime. A $p$-group $G$ is autonilpotent if and only if $\mathrm{Aut}G$ is a $p$-group. \end{cor}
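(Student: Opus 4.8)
The plan is to read off the Corollary as the specialization of Theorem~\ref{th1} to the case $\pi(G)=\{p\}$. If $G$ is trivial both assertions hold vacuously, so assume $G$ is a nontrivial $p$-group. Then $\pi(G)=\{p\}$, the unique Sylow subgroup of $G$ is $G$ itself, and $G$ is (trivially) the direct product of this single Sylow subgroup. Consequently the first hypothesis of Theorem~\ref{th1}, that $G$ be the direct product of its Sylow subgroups, is automatically satisfied, while its second hypothesis, that the automorphism group of each Sylow $q$-subgroup be a $q$-group, collapses to the single statement that $\mathrm{Aut}G$ is a $p$-group. Feeding these two observations into Theorem~\ref{th1} yields precisely the equivalence claimed: a $p$-group $G$ is autonilpotent if and only if $\mathrm{Aut}G$ is a $p$-group.

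Thus there is no genuine obstacle; all the substantive work is contained in Theorem~\ref{th1}, and the Corollary is a matter of checking that for a $p$-group the two hypotheses of that theorem degenerate into the single condition on $\mathrm{Aut}G$. For completeness I would also indicate the more self-contained route through Theorem 1.1: by Theorem 1.1, $G$ is autonilpotent exactly when $\mathrm{Aut}G$ stabilizes some chain of subgroups of $G$. If $\mathrm{Aut}G$ is a $p$-group, then it acts on each elementary abelian factor of a chief series of $G$ as a $p$-subgroup of the corresponding $\mathrm{GL}$, hence unipotently, so it stabilizes a complete chain; conversely, the stability group of a chain in a $p$-group is itself a $p$-group, so autonilpotency of $G$ forces $\mathrm{Aut}G$ to be a $p$-group. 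Either argument closes the statement, and I would present the first, since it uses only the already-established Theorem~\ref{th1}.
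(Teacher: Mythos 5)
Your proposal is correct and matches the paper's intent: the Corollary is stated immediately after Theorem~\ref{th1} with no separate proof, precisely because for a nontrivial $p$-group the Sylow-decomposition hypothesis is vacuous and the condition on Sylow automorphism groups collapses to $\mathrm{Aut}G$ being a $p$-group, exactly as you argue (equivalently, one can quote Theorem~\ref{t1} with $R=\mathrm{Aut}G$, noting $C_{\mathrm{Aut}G}(G)=1$). One caution about your secondary sketch, which you rightly do not present: a chief series of $G$ need not be $\mathrm{Aut}G$-invariant, so there you would need a characteristic series with elementary abelian factors (or the $\mathrm{Aut}G$-composition series the paper builds in Theorem~\ref{t1}) before invoking unipotence.
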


An example of a $p$-group $G$ of order $p^5$ $(p>3)$ such that $\mathrm{Aut}G$ is also a $p$-group  was constructed in \cite{a3}. In the library of small groups  of GAP \cite{h0} there are 30 groups of order $3^6$ such that their automorphism groups are also $3$-groups (for example groups [729, 31], [729, 41] and [729, 46]).

Note that $L_n(G)\subseteq L_{n+1}(G)$ for all natural $n$. Since $G$ is finite, it is clear that $L_n(G)=L_{n+1}(G)$ for some natural $n$. In this case we shall call a subgroup $L_n(G)=L_\infty(G)$ the \emph{absolute hypercenter} of $G$. Hence $G$  is   autonilpotent if and only if $G=L_\infty(G)$. In \cite{h1} Baer  showed that a $p$-element $g$ of  $G$ belongs to the hypercenter $Z_\infty(G)$ of $G$ if and only if it commutes with all $p'$-elements of $G$. It means that $g^x=g$ for any $p'$-element $x$ of $G$. Here we obtain the following analogue of Baer's result

\begin{thm}\label{t2}Let $g$ be a $p$-element of a group $G$. Then $g\in L_\infty(G)$ if and only if $g^\alpha=g$ for every $p'$-element $\alpha$ of $\mathrm{Aut}G$.      \end{thm}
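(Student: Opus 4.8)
The plan is to pass to the holomorph $H=\mathrm{Hol}(G)=G\rtimes\mathrm{Aut}G$ and to identify the absolute hypercenter of $G$ with the ordinary hypercenter of $H$ intersected with $G$, after which the statement becomes a direct consequence of the quoted theorem of Baer applied to the finite group $H$. Concretely, I would first prove the key identity
\[L_n(G)=Z_n(H)\cap G\qquad\text{for every }n,\]
and hence $L_\infty(G)=Z_\infty(H)\cap G$.

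To establish this identity, recall that for $g\in G\trianglelefteq H$ and $h\in H$ the commutator $[g,h]=g^{-1}g^{h}$ again lies in $G$, and $g^{h}$ equals the image of $g$ under the automorphism $\phi_h\in\mathrm{Aut}G$ induced by conjugation by $h$; thus $[g,h]=[g,\phi_h]$ is an autocommutator. Iterating, $[g,h_1,\dots,h_n]=[g,\phi_{h_1},\dots,\phi_{h_n}]$. Since $\mathrm{Inn}G\le\mathrm{Aut}G$, the conjugation homomorphism $H\to\mathrm{Aut}G$ is surjective, so as the $h_i$ range over $H$ the induced $\phi_{h_i}$ range over all of $\mathrm{Aut}G$. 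Comparing with $Z_n(H)\cap G=\{g\in G:[g,h_1,\dots,h_n]=1\ \forall h_i\in H\}$ and with the definition of $L_n(G)$ then gives the claimed equality.

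Next I apply Baer's theorem to $H$: the element $g$, which is a $p$-element of $H$ as well, lies in $Z_\infty(H)$ if and only if $g^{y}=g$ for every $p'$-element $y$ of $H$. It remains to replace ``$p'$-element of $H$'' by ``$p'$-element of $\mathrm{Aut}G$''. In one direction, any $p'$-element $\alpha\in\mathrm{Aut}G\le H$ is a $p'$-element of $H$, and conjugation by $\alpha$ in $H$ realizes the automorphism $\alpha$ on $G$; in the other direction, for a $p'$-element $y\in H$ the induced automorphism $\phi_y$ is a $p'$-element of $\mathrm{Aut}G$ (its order divides that of $y$) with $g^{y}=g^{\phi_y}$. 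Since the set of $p'$-elements is closed under inversion, the usual $\alpha$ versus $\alpha^{-1}$ convention in the action is immaterial, and the two fixed-point conditions coincide. Combining with $g\in L_\infty(G)\iff g\in Z_\infty(H)$ finishes the proof.

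The main obstacle is the key identity $L_n(G)=Z_n(H)\cap G$: one must check that restricting attention to autocommutators with automorphisms, rather than with all elements of $H$, loses nothing, which is exactly where $\mathrm{Inn}G\le\mathrm{Aut}G$ is used to guarantee surjectivity of $H\to\mathrm{Aut}G$. The remaining translation between $p'$-elements of $H$ and of $\mathrm{Aut}G$ is routine once this identity and the surjectivity are in hand.
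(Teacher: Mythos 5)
Your proof is correct, but it takes a genuinely different route from the paper. The paper does not prove Theorem \ref{t2} directly: it proves a more general statement (Theorem \ref{t2.1}) for an arbitrary operator group $R$ with $\mathrm{Inn}G\leq\mathrm{Aut}_RG$, and obtains Theorem \ref{t2} as the case $R=\mathrm{Aut}G$. That proof is self-contained: the forward direction is an induction on $k$ for $g\in L_k(G,R)$ using the identity $g^{\alpha^m}=g[g,\alpha]^m$ and nilpotency of $L_\infty(G,R)$; the converse constructs the subgroup $G_p$ of elements fixed by all $p'$-elements of $R$, shows its Sylow $p$-subgroup $P$ is $R$-admissible with $\mathrm{Aut}_RP$ a $p$-group, and then invokes the paper's Theorem \ref{t1} on $R$-nilpotency of $p$-groups. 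Notably, Baer's classical theorem is then re-derived as the corollary $R=\mathrm{Inn}G$, so the paper never assumes it. You go the opposite way: you take Baer's theorem as known, pass to the holomorph $H=G\rtimes\mathrm{Aut}G$, and establish the identity $L_n(G)=Z_n(H)\cap G$ via the surjection $h\mapsto\phi_h$ of $H$ onto $\mathrm{Aut}G$ and the fact that iterated commutators $[g,h_1,\dots,h_n]$ of an element of the normal subgroup $G$ depend only on the induced automorphisms; the translation between $p'$-elements of $H$ and of $\mathrm{Aut}G$ is handled correctly in both directions (orders of induced automorphisms divide orders of elements). Your argument is shorter and arguably more transparent for this particular statement, and it would even generalize to the paper's Theorem \ref{t2.1} by replacing $\mathrm{Hol}(G)$ with $G\rtimes\mathrm{Aut}_RG$; what it gives up is self-containedness (Baer is used as a black box rather than reproved) and the auxiliary machinery ($G_p$ and Theorem \ref{t1}) that the paper reuses in the proofs of Theorems \ref{t3.4} and \ref{t3.5}. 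One cosmetic remark: surjectivity of $H\to\mathrm{Aut}G$ is immediate because the factor $\mathrm{Aut}G\leq H$ already maps identically onto $\mathrm{Aut}G$; the clause ``since $\mathrm{Inn}G\leq\mathrm{Aut}G$'' is only needed to see that the image of the factor $G$ adds nothing new.
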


\begin{cor}
 A group $G$ is   autonilpotent if and only if    every  automorphism $\alpha$  of $G$   fixes all elements of $G$  whose orders are coprime to the order of $\alpha$.
\end{cor}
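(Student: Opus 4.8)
The plan is to derive this corollary directly from Theorem~\ref{t2}, translating between the ``$p$-element versus $p'$-automorphism'' language of that theorem and the ``coprime orders'' language of the corollary. Throughout I will use that $G$ is autonilpotent exactly when $G=L_\infty(G)$, and that $L_\infty(G)$ is a subgroup of $G$, as recorded just before the statement.

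First I would set up a reduction to elements of prime-power order. Every $g\in G$ is the product $g=\prod_{p}g_p$ of its primary components $g_p$, each of which is a power of $g$; hence the $g_p$ pairwise commute and lie in $\langle g\rangle$, and $g^\alpha=\prod_p g_p^\alpha$ for every $\alpha\in\mathrm{Aut}G$ since $\alpha$ is a homomorphism. Two consequences follow. First, $G=L_\infty(G)$ holds if and only if every $p$-element of $G$ (over all primes $p$) lies in $L_\infty(G)$: the forward implication is trivial, and the reverse holds because the commuting parts $g_p$ then all lie in the subgroup $L_\infty(G)$, forcing $g\in L_\infty(G)$. Second, if $\alpha$ fixes each primary part $g_p$ then it fixes $g$, and when $\gcd(|g|,|\alpha|)=1$ every prime $p$ dividing $|g|$ satisfies $p\nmid|\alpha|$, so each such $g_p$ is fixed by the $p'$-automorphism $\alpha$.

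With this bookkeeping the two directions are immediate from Theorem~\ref{t2}. Suppose $G$ is autonilpotent, and take $\alpha\in\mathrm{Aut}G$ together with $g\in G$ satisfying $\gcd(|g|,|\alpha|)=1$. For each primary part $g_p$ the prime $p$ does not divide $|\alpha|$, so $\alpha$ is a $p'$-element of $\mathrm{Aut}G$; since $g_p\in G=L_\infty(G)$, Theorem~\ref{t2} gives $g_p^\alpha=g_p$, and multiplying yields $g^\alpha=g$. Conversely, assume every automorphism fixes all elements of coprime order, and let $g$ be any $p$-element and $\alpha$ any $p'$-element of $\mathrm{Aut}G$. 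Then $|g|$ is a power of $p$ while $p\nmid|\alpha|$, so $\gcd(|g|,|\alpha|)=1$ and the hypothesis gives $g^\alpha=g$; as $\alpha$ ranges over all $p'$-elements, Theorem~\ref{t2} places $g$ in $L_\infty(G)$. By the reduction above this forces $G=L_\infty(G)$, i.e.\ $G$ is autonilpotent.

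I do not anticipate a genuine obstacle, since the statement is essentially a repackaging of Theorem~\ref{t2}; the only points requiring care are the primary decomposition and the observation that ``a $p$-element fixed by every $p'$-automorphism'' corresponds exactly to ``an element fixed by every automorphism of coprime order.'' The single external fact I rely on is that $L_\infty(G)$ is a subgroup, which is what lets me reassemble the fixed primary parts into $g$.
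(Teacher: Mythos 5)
Your proof is correct and follows the paper's intended route: the corollary is stated without proof as an immediate consequence of Theorem~\ref{t2}, and your argument is exactly that derivation, with the primary decomposition $g=\prod_p g_p$ and the fact that $L_\infty(G)$ is a subgroup supplying the routine bookkeeping the paper leaves implicit. Nothing is missing; the translation between ``$p$-element fixed by every $p'$-automorphism'' and ``element fixed by every automorphism of coprime order'' is handled correctly in both directions.
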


According to Frobenius $p$-nilpotency criterion (see \cite[5E, 5.26]{is}) a group $G$ is nilpotent if and only if $N_G(P)/C_G(P)$ is a $p$-group for every  $p$-subgroup $P$ of $G$ and every    $p\in\pi(G)$.

\begin{thm}\label{t3}  A group $G$ is autonilpotent if and only if $N_{\mathrm{Aut}G}(P)/C_{\mathrm{Aut}G}(P)$ is a $p$-group for every  $p$-subgroup $P$ of $G$ and every $p\in\pi(G)$.     \end{thm}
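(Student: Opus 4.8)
The plan is to reduce everything to the structure theorem characterising autonilpotent groups (the one stating that $G$ is autonilpotent if and only if it is the direct product of its Sylow subgroups and $\mathrm{Aut}(S_p)$ is a $p$-group for every $p\in\pi(G)$) together with the classical Frobenius criterion quoted just above. Throughout I would write $N=N_{\mathrm{Aut}G}(P)$ for the setwise stabiliser $\{\alpha\in\mathrm{Aut}G:\alpha(P)=P\}$ and $C=C_{\mathrm{Aut}G}(P)$ for the pointwise stabiliser $\{\alpha:\alpha(x)=x\ \forall x\in P\}$; one checks routinely that $C\trianglelefteq N$ and that $N/C$ embeds into $\mathrm{Aut}(P)$. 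The one elementary lemma I would record first is that, for a finite group $N$ with normal subgroup $C$, the quotient $N/C$ is a $p$-group if and only if every $p'$-element of $N$ lies in $C$; this follows from the commuting $p$-/$p'$-decomposition of elements and the uniqueness of that decomposition in $N/C$.

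For the forward direction, assume $G$ is autonilpotent. By the structure theorem $G=\prod_q S_q$ is the direct product of its Sylow subgroups and each $\mathrm{Aut}(S_q)$ is a $q$-group; since $G$ is nilpotent this yields $\mathrm{Aut}G=\prod_q\mathrm{Aut}(S_q)$, so every $\alpha\in\mathrm{Aut}G$ decomposes as $\alpha=\prod_q\alpha_q$ with $\alpha_q\in\mathrm{Aut}(S_q)$ a $q$-element. If $\alpha$ is a $p'$-element then its $p$-component $\alpha_p$ is at once a $p$-element and a $p'$-element, hence $\alpha_p=1$, so $\alpha$ fixes $S_p$ pointwise. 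As any $p$-subgroup $P$ lies in the unique Sylow $p$-subgroup $S_p$, every $p'$-element of $N$ fixes $P$ pointwise and therefore lies in $C$; by the elementary lemma $N/C$ is a $p$-group.

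For the converse, assume $N_{\mathrm{Aut}G}(P)/C_{\mathrm{Aut}G}(P)$ is a $p$-group for every $p$-subgroup $P$ and every $p\in\pi(G)$, and I would verify the two conditions of the structure theorem. To obtain nilpotency, send $g\mapsto i_g$ (conjugation by $g$) from $N_G(P)$ into $N_{\mathrm{Aut}G}(P)$; since $i_g$ fixes $P$ pointwise exactly when $g\in C_G(P)$, this induces an embedding $N_G(P)/C_G(P)\hookrightarrow N_{\mathrm{Aut}G}(P)/C_{\mathrm{Aut}G}(P)$, so $N_G(P)/C_G(P)$ is a $p$-group for every $p$-subgroup $P$, and the classical Frobenius criterion gives that $G$ is nilpotent. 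Now $G$ nilpotent makes each $S_p$ characteristic and gives $\mathrm{Aut}G=\prod_q\mathrm{Aut}(S_q)$; applying the hypothesis to the characteristic subgroup $P=S_p$ makes $N_{\mathrm{Aut}G}(S_p)=\mathrm{Aut}G$, while the restriction map $\mathrm{Aut}G\to\mathrm{Aut}(S_p)$ is surjective with kernel $C_{\mathrm{Aut}G}(S_p)$, so $\mathrm{Aut}(S_p)\cong\mathrm{Aut}G/C_{\mathrm{Aut}G}(S_p)$ is a $p$-group. Both conditions of the structure theorem hold, whence $G$ is autonilpotent.

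I expect the main obstacle to be the converse, and specifically the clean isolation of its two structural ingredients: the embedding $N_G(P)/C_G(P)\hookrightarrow N_{\mathrm{Aut}G}(P)/C_{\mathrm{Aut}G}(P)$ that lets the classical Frobenius criterion do the work of establishing nilpotency, and the identification of $\mathrm{Aut}(S_p)$ with $N_{\mathrm{Aut}G}(S_p)/C_{\mathrm{Aut}G}(S_p)$ through the characteristic Sylow subgroup. Once nilpotency is secured, the global decomposition $\mathrm{Aut}G=\prod_q\mathrm{Aut}(S_q)$ renders both the $p'$-fixed-point argument of the forward direction and the restriction argument of the converse essentially routine.
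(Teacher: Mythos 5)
Your proof is correct, but it takes a genuinely different route from the paper's. The paper never argues directly with $\mathrm{Aut}G$: Theorem \ref{t3} is obtained as the case $R=\mathrm{Aut}G$ of the operator-group version (Theorem \ref{t3.4}), and in that proof both implications run through the Baer-type theorem on the absolute hypercenter $L_\infty(G,R)$ (Theorem \ref{t2.1}). The forward direction reads off $g^\alpha=g$ for all $p'$-elements $\alpha$ from $G=L_\infty(G,R)$, while the converse establishes nilpotency not by citing the classical Frobenius criterion but by a Schmidt-subgroup argument (a minimal non-nilpotent subgroup would yield a $q'$-order inner automorphism acting nontrivially on a $q$-subgroup), and then pushes each Sylow subgroup into $L_\infty(G,R)$ by Theorem \ref{t2.1} again. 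You instead reduce to the structure theorem of the introduction (direct product of Sylow subgroups, each $\mathrm{Aut}(S_p)$ a $p$-group) together with the classical Frobenius criterion, using two clean ingredients: the embedding $N_G(P)/C_G(P)\hookrightarrow N_{\mathrm{Aut}G}(P)/C_{\mathrm{Aut}G}(P)$, and the coprime decomposition $\mathrm{Aut}G\cong\prod_q\mathrm{Aut}(S_q)$, which identifies $\mathrm{Aut}(S_p)$ with $N_{\mathrm{Aut}G}(S_p)/C_{\mathrm{Aut}G}(S_p)$. Both of your steps are sound, and your route is shorter and more modular. What it gives up is generality and logical independence: the paper's argument covers arbitrary operator groups $R$ with $\mathrm{Inn}G\leq\mathrm{Aut}_RG$, and within the paper the structure theorem is itself deduced (via Theorems \ref{t3.4} and \ref{t3.5}) from the general form of the statement you are proving. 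Your proof is therefore not circular --- the proof of the structure theorem nowhere uses Theorem \ref{t3} itself --- but it derives the special case from machinery that already contains it, so it serves as an alternative derivation given the paper's other results rather than as an independent proof.
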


\section{Preliminaries}

%Recall that $\mathrm{Aut}G$ $(\mathrm{Inn}G)$  is the group of all (inner) automorphisms of a group $G$.

For an automorphism $\alpha$ and an element $x$ of a group $B$ we denote by $x^\alpha$ the image of $x$ under $\alpha$.       Let $A$ and $B$ be groups and $\varphi$ be a homomorphism form $A$ to $\mathrm{Aut}B$. We can define the action of $A$ on $B$ in the following way
\[ x^a=x^{\varphi(a)}, \,\, x\in B, \,\, a\in A.\]
In this case $A$ is called a \emph{group of operators} of $B$. It is known that the following sets are subgroups of $A$ for any subgroup $D$ of $B$.
$$ N_A(D)=\{a\in A \,|\, D^a=D\}, \,\,   C_A(D)=\{a\in A \,|\, d^a=d \,\,\forall d\in D\}\,\,\mathrm{and} \,\,\mathrm{Aut}_A(D)=N_A(D)/C_A(D),$$
where $\mathrm{Aut}_A(D)$ is a group of automorphisms  induced by $A$ on  $D$.
Note that $\mathrm{Ker}\varphi=C_A(B)$. Hence the actions of $A$  and $\mathrm{Aut}_A(B)=A/C_A(B)$ on $B$ are the same.

Recall that   $[b, a]=b^{-1}b^a$ for $a\in A$ and $b\in B$. Let $y\in B$. We can also consider $y$ as an element of $\mathrm{Inn}B$. Assume that $y\in N_{\mathrm{Aut}B}(\mathrm{Aut}_AB)$. Then we can consider $a^y$ as element of $A$    $$x^{a^y}=x^{y^{-1}ay}=x^{y^{-1}\varphi(a)y}=x^{\varphi(a)^y}, \,\, x\in B.$$
Now it is clear that
$$[b, a]^y=y^{-1}b^{-1}yy^{-1}b^ay=(b^{-1})^yb^{ay}=(b^y)^{-1}(b^{y})^{y^{-1}ay}=[b^y, a^y].$$

\section{$R$-nilpotent Groups}

Let $R$ be a group of operators for a group $G$ and
$$ K_0(G, R)=G\;\mathrm{and}\; K_n(G, R)=[K_{n-1}(G, R), R].$$
%Hence $$ K_n(G, R)=\langle[x, \alpha_1,\dots, \alpha_n]\,|\,x\in G,\;\mathrm{and}\;\alpha_1,\dots,\, \alpha_n\in R\rangle.$$
$$L_0(G, R)=1\;\mathrm{and}\; L_n(G, R)=\{ x\in G\,\,| \,\,[x, \alpha_1,\dots, \alpha_n]=1 \,\,\forall \alpha_1,\dots, \alpha_n\in R\}.$$
\begin{lem}\label{l1}Let $R$ be a group of operators for a group $G$ and $\mathrm{Inn}G\leq N_{\mathrm{Aut}G}(\mathrm{Aut}_RG)$. Then

$(1)$ $L_n(G, R)$ is a normal subgroup of $G$  for all natural $n$;

$(2)$ $L_n(G, R)=G$ for some natural $n$ if and only if $K_n(G, R)=1$.\end{lem}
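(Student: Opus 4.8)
The plan is to prove both parts by induction, reducing part (2) to the standard interlacing of the upper and lower $R$-central series once the subgroup structure from part (1) is in hand. The first thing I would record is the recursive description of the terms: since $[x,\alpha_1,\dots,\alpha_n]=[[x,\alpha_1],\alpha_2,\dots,\alpha_n]$, for every $n\geq 1$ an element $x$ lies in $L_n(G,R)$ if and only if $[x,\alpha]\in L_{n-1}(G,R)$ for all $\alpha\in R$. This recursion, together with $L_0(G,R)=1$, drives every argument below.

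For part (1) I would induct on $n$, proving simultaneously that $L_n(G,R)$ is an $R$-invariant normal subgroup. The base case $L_0(G,R)=1$ is trivial. Assuming $L_{n-1}(G,R)$ is $R$-invariant and normal, I form $\bar G=G/L_{n-1}(G,R)$, on which $R$ acts, and the recursion identifies $L_n(G,R)$ with the full preimage of the fixed-point set $C_{\bar G}(R)=\{\bar x \mid [\bar x,\alpha]=1\ \forall\,\alpha\in R\}$. Being the preimage of a subgroup which is moreover $R$-invariant, $L_n(G,R)$ is then an $R$-invariant subgroup containing $L_{n-1}(G,R)$. Normality is where the hypothesis $\mathrm{Inn}G\leq N_{\mathrm{Aut}G}(\mathrm{Aut}_RG)$ enters: for $g\in G$ with associated inner automorphism $y$, iterating the identity $[b,a]^y=[b^y,a^y]$ from the Preliminaries gives $[x,\alpha_1,\dots,\alpha_n]^y=[x^y,\alpha_1^y,\dots,\alpha_n^y]$, where each $\alpha_i^y\in R$ is an operator inducing $\varphi(\alpha_i)^y\in\mathrm{Aut}_RG$ — this is meaningful precisely because $y$ normalizes $\mathrm{Aut}_RG$. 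Since conjugation by $y$ permutes $\mathrm{Aut}_RG$, and membership in $L_n(G,R)$ depends on the operators only through their images in $\mathrm{Aut}_RG$, it follows that $x^g=x^y\in L_n(G,R)$; thus $L_n(G,R)$ is closed under $G$-conjugation, hence normal.

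For part (2) I would fix $n$ and prove the sharper statement that $L_n(G,R)=G$ if and only if $K_n(G,R)=1$, from which the stated ``for some $n$'' equivalence is immediate. For the reverse direction, assuming $K_n(G,R)=1$, I show $K_{n-i}(G,R)\subseteq L_i(G,R)$ by induction on $i$: the case $i=0$ is the hypothesis, and the step holds because $[K_{n-i},R]=K_{n-i+1}\subseteq L_{i-1}$ forces $[x,\alpha]\in L_{i-1}$ for every $x\in K_{n-i}$ and $\alpha\in R$, whence $x\in L_i$ by the recursion; taking $i=n$ gives $G=K_0\subseteq L_n$. For the forward direction, assuming $L_n(G,R)=G$, I show $K_i(G,R)\subseteq L_{n-i}(G,R)$ by induction on $i$: again $i=0$ is the hypothesis, and if $K_{i-1}\subseteq L_{n-i+1}$ then each generator $[x,\alpha]$ of $K_i=[K_{i-1},R]$ lies in $L_{n-i}$ by the recursion, so $K_i\subseteq L_{n-i}$ since $L_{n-i}$ is a subgroup by part (1); taking $i=n$ yields $K_n\subseteq L_0=1$.

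The main obstacle is the normality claim in part (1): one must transport the operators $\alpha\in R$ through conjugation by group elements and check that the results stay inside $\mathrm{Aut}_RG$, which is exactly what the standing hypothesis and the commutator identity of the Preliminaries are designed to guarantee. Everything in part (2) is then the routine interlacing of the two $R$-central series, using only the recursion and the subgroup property secured in part (1).
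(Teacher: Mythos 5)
Your proof is correct, and much of it coincides with the paper's argument: normality in part (1) is established by exactly the same device (for $y\in G$ and $\alpha\in R$ there is $\beta\in R$ acting as $\beta^y=\alpha$, which is meaningful because $\mathrm{Inn}G\leq N_{\mathrm{Aut}G}(\mathrm{Aut}_RG)$, combined with the identity $[b,a]^y=[b^y,a^y]$), and your forward direction of part (2) is literally the paper's interlacing $K_i(G,R)\leq L_{n-i}(G,R)$. Where you genuinely diverge is the subgroup property in part (1). The paper proves it directly, with no induction on $n$: it first establishes closure of the set $L_n(G,R)$ under conjugation, then expands $[xy,\alpha]=[x,\alpha]^y[y,\alpha]=[x^y,\alpha^y][y,\alpha]$ and iterates to write $[xy,\alpha_1,\dots,\alpha_n]=[z,\beta_1,\dots,\beta_n][y,\alpha_1,\dots,\alpha_n]$ with $z$ a conjugate of $x$ and $\beta_i$ induced by $R$, so both factors vanish. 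You instead run an induction on $n$ carrying normality and $R$-invariance of $L_{n-1}(G,R)$ as hypotheses, and identify $L_n(G,R)$ with the full preimage of the fixed-point subgroup $C_{\bar G}(R)$ in $\bar G=G/L_{n-1}(G,R)$, from which the subgroup and $R$-invariance properties are immediate. Your route is arguably cleaner: it avoids the paper's iterated product expansion, whose intermediate conjugations (the conjugating element changes at each step, so $z$ is a conjugate of $x$ by a product of commutators) the paper leaves implicit and which a careful reader must verify. The price is a heavier induction set-up, since forming the quotient with its $R$-action requires both normality and $R$-invariance of the previous term; as a by-product you obtain a fact the paper never states, namely that each $L_n(G,R)$ is $R$-admissible. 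Finally, your reverse direction of (2), the dual interlacing $K_{n-i}(G,R)\subseteq L_i(G,R)$, is a mild variant of the paper's one-line observation that every iterated commutator $[x,\alpha_1,\dots,\alpha_n]$ lies in $K_n(G,R)$; the two are equivalent.
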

\begin{proof} Let prove $(1)$. Let $x\in L_n(G, R)$. Note that for every $\alpha\in R$ and $y\in G$ there exists $\beta\in R$ with $\alpha=\beta^y$.  From $1=1^y=[x, \alpha_1,\dots, \alpha_n]^y=[x^y, \alpha_1^y,\dots, \alpha_n^y] \,\,\forall \alpha_1,\dots, \alpha_n\in R$ it follows that $[x^y, \alpha_1,\dots, \alpha_n]=1 \,\,\forall \alpha_1,\dots, \alpha_n\in R$. Hence $x^y
\in L_n(G, R)$. Thus $ L_n(G, R)$  is normal in $G$.

Let show that $ L_n(G, R)$ is a subgroup of $G$.  Since $G$ is finite, it is sufficient to show that if $x, y\in L_n(G, R)$, then $xy\in L_n(G, R)$. Let $\alpha\in R$. It is straightforward to check that $[xy, \alpha]=[x, \alpha]^y[y, \alpha]$. From $\mathrm{Inn}G\leq N_{\mathrm{Aut}G}(\mathrm{Aut}_RG)$ it follows that $[x, \alpha]^y[y, \alpha]=[x^y, \alpha^y][y, \alpha]$, where $\alpha^y\in R$. It means that $[xy, \alpha_1,\dots, \alpha_n]=[z, \beta_1,\dots, \beta_n][y, \alpha_1,\dots, \alpha_n]$, where $\beta_1,\dots,
\beta_n\in R$ and $z\in L_n(G, R)$ as a conjugate of $x$. Therefore $xy\in L_n(G, R)$. Thus  $L_n(G, R)\leq G$.

 Let prove $(2)$. Assume that $K_n(G, R)=1$. From $\{[x, \alpha_1,\dots, \alpha_n]\,|\,\forall x\in G$ and $\forall \alpha_1,\dots, \alpha_n\in R\}\subseteq K_n(G, R)$ it follows that $L_n(G, R)=G$.

Assume now that $L_n(G, R)=G$. Since $L_i(G, R)$ is a subgroup,  $[L_i(G, R), R]\leq L_{i-1}(G, R)$. From  $L_n(G, R)=G$ it follows that
 $K_i(G, R)\leq L_{n-i}(G, R)$. Thus $K_n(G, R)=1$.  \end{proof}
%It is clear that
We shall call a group $G$ $R$-\emph{nilpotent} if $K_n(G, R)=1$  for some natural $n$. Hence if $R=G$, then $R$-nilpotent group $G$ is nilpotent and if $R=\mathrm{Aut}G$, then $R$-nilpotent group $G$ is autonilpotent by Lemma \ref{l1}.

\begin{thm}\label{th3.1}
Let $R$ be a group of operators for a group $G$. Then  $G$ is   $R$-nilpotent if and only if  $R$ stabilizes some chain of subgroups of $G$.\end{thm}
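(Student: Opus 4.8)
The plan is to prove both implications through the lower $R$-central series $K_n(G,R)$, since the stabilization requirement is essentially the defining recursion $K_n(G,R)=[K_{n-1}(G,R),R]$ read as a subgroup series. Throughout I take ``$R$ stabilizes the chain $1=G_0\le G_1\le\cdots\le G_m=G$'' to mean $[G_i,R]\le G_{i-1}$ for each $i$, in accordance with the Kaloujnine--Hall setup.

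For the implication that a stabilizing chain forces $R$-nilpotency, suppose $R$ stabilizes $1=G_0\le G_1\le\cdots\le G_m=G$, so that $[G_i,R]\le G_{i-1}$. I would prove by induction on $i$ that $K_i(G,R)\le G_{m-i}$: the base case $K_0(G,R)=G=G_m$ is immediate, and if $K_{i-1}(G,R)\le G_{m-i+1}$ then $K_i(G,R)=[K_{i-1}(G,R),R]\le[G_{m-i+1},R]\le G_{m-i}$. Taking $i=m$ gives $K_m(G,R)\le G_0=1$, hence $G$ is $R$-nilpotent. This direction is routine and needs no hypothesis beyond $R$ being a group of operators.

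For the converse, assume $K_n(G,R)=1$ and exhibit the series $G=K_0(G,R)\ge K_1(G,R)\ge\cdots\ge K_n(G,R)=1$ as a chain stabilized by $R$. Each $K_i(G,R)$ is a subgroup by definition, and $R$ stabilizes the series because $[K_{i-1}(G,R),R]=K_i(G,R)$ by construction. The one point needing verification is that this is genuinely a \emph{descending} chain, i.e. that $K_i(G,R)\le K_{i-1}(G,R)$, and this reduces to showing each $K_i(G,R)$ is $R$-invariant. I would establish invariance by induction: $K_0(G,R)=G$ is $R$-invariant, and for a generator $[x,\beta]$ of $K_i(G,R)=[K_{i-1}(G,R),R]$ and any $\alpha\in R$ one computes $[x,\beta]^\alpha=[x^\alpha,\alpha^{-1}\beta\alpha]$, where $x^\alpha\in K_{i-1}(G,R)$ by the inductive hypothesis and $\alpha^{-1}\beta\alpha\in R$ because $R$ is a group; hence $[x,\beta]^\alpha\in K_i(G,R)$, so $K_i(G,R)$ is $R$-invariant. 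Invariance then yields $K_i(G,R)=[K_{i-1}(G,R),R]\le K_{i-1}(G,R)$, so the series is a descending chain of subgroups stabilized by $R$.

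The main obstacle is precisely this $R$-invariance step, and it is worth flagging that it is exactly where the hypothesis that $R$ is a \emph{group} of operators (not merely a set of automorphisms) enters: the closure $\alpha^{-1}\beta\alpha\in R$ is what makes $[x,\beta]^\alpha$ land back in $K_i(G,R)$. By contrast, the normalizer condition $\mathrm{Inn}G\le N_{\mathrm{Aut}G}(\mathrm{Aut}_RG)$ invoked in Lemma \ref{l1} for the normality of the $L_n(G,R)$ is not needed here, since stabilization of the $K$-series only requires $R$-invariance of its terms rather than normality in $G$.
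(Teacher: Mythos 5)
Your proof is correct and follows essentially the same route as the paper: one direction exhibits the lower $R$-central series $G=K_0(G,R)\geq K_1(G,R)\geq\dots\geq K_n(G,R)=1$ as the stabilized chain, and the other shows by induction that $K_i(G,R)$ descends along any stabilized chain, forcing $K_n(G,R)=1$. Your explicit verification that each $K_i(G,R)$ is $R$-invariant (via $[x,\beta]^\alpha=[x^\alpha,\alpha^{-1}\beta\alpha]$), so that the $K$-series really is a descending chain, is a detail the paper leaves implicit, but it does not change the approach.
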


\begin{proof} Assume that $G$ is $R$-nilpotent. Then $1=K_n(G, R)$ for some $n$. From  $[K_i(G, R), R]=K_{i+1}(G, R)$ it follows that $xK_{i+1}(G, R)=x^\alpha K_{i+1}(G, R)$ for every $\alpha\in R$ and $x\in K_i(G, R)$. It means that $R$ stabilizes  $$1=K_n(G, R)\leq K_{n-1}(G, R)\leq\dots\leq K_0(G, R)=G.$$
Assume that  $R$ stabilizes the following chain of subgroups
$$1=G_n<G_{n-1}<\dots<G_0=G$$
Note tat $K_0(G, R)=G\leq G_0$. Assume that $K_i(G, R)\leq G_i$. Let show that $K_{i+1}(G, R)\leq G_{i+1}$. Since $x^\alpha G_{i+1}=x G_{i+1}$ for every $\alpha\in R$ and $x\in G_i$, we see that $$K_{i+1}(G, R)=[K_{i}(G, R), R]\leq [G_i, R]\leq G_{i+1}.$$
Hence $K_n(G, R)\leq G_n=1$. Therefore $G$ is $R$-nilpotent.
\end{proof}

%\begin{remark}
 % If we define $R$-\emph{nilpotent} group as group with $K_n(G, R)=1$ for some natural $n$ then condition  $\mathrm{Inn}G\leq N_{\mathrm{Aut}G}(\mathrm{Aut}_RG)$ can be omitted in Theorem \ref{th3.1}
%\end{remark}

 Theorem \ref{th1} follows from Theorem \ref{th3.1} when $R=\mathrm{Aut}G$. According  to \cite[A, 13.8(b)]{s8} every group $G$ is $\mathrm{F}(G)$-nilpotent, where $\mathrm{F}(G)$ is the Fitting subgroup of $G$.

%Recently have introduced the notion of autonilpotent group. A a
%Let  $G$ be a group, $x\in G$  and  $\alpha\in \mathrm{Aut}(G)$ then
%\[[x, \alpha] = x^{-1}x^\alpha = x^{-1}\alpha(x)\]
%is called \emph{autocommutator} of $x$ and $\alpha$. A subgroup
%\[K(G)=[G, \mathrm{Aut}(G)]=\langle [x, \alpha] | x\in G, \alpha\in \mathrm{Aut}(G)\rangle \]
%is called the \emph{autocommutator subgroup} of $G$. Put $K_0(G)=G$ and $K_1(G)=K(G)$, then for $n\leq 1$
%\[K_n(G)=[K_{n-1}(G), \mathrm{Aut}(G)]\]
% is called the \emph{$n$th-autocommutator subgroup} of $G$
 %A group $G$ is called  \emph{autonilpotent} if $K_n(G)=1$ for some $n\geq 0$.
% Note that every autonilpotent group is nilpotent.
% If $G$ is abelian autonilpotent group then $G$ is $2$-group.   According to if $G$ is autonilpotent $p$-group then $\mathrm{Aut}G$ is a $p$-group too. There were no examples of autonilpotent $p$-groups for odd prime $p$. The aim of this paper is to give such examples.

%\subsection{Proof of Theorem \ref{th1}}

%\subsection{Proof of Theorem \ref{th2}}
%Here we will prove more general result then Theorem \ref{th2}

\begin{thm}\label{t1} Let $p$ be a prime and $R$ be a group of operators for  a $p$-group $G$. Then $G$  is $R$-nilpotent if and only if $\mathrm{Aut}_RG$ is a $p$-group.\end{thm}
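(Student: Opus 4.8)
The plan is to reduce everything to the faithful action of $A:=\mathrm{Aut}_RG$ and then handle the two implications by entirely different tools. Since the actions of $R$ and of $\mathrm{Aut}_RG=R/C_R(G)$ on $G$ coincide, the subgroups $K_n(G,R)$ and $K_n(G,A)$ are equal, so $G$ is $R$-nilpotent if and only if it is $A$-nilpotent. Thus it suffices to prove that $G$ is $A$-nilpotent if and only if the faithfully acting group $A\leq\mathrm{Aut}G$ is a $p$-group.

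For the sufficiency I would pass to the external semidirect product $P=G\rtimes A$. If $A$ is a $p$-group, then $|P|=|G|\,|A|$ is a power of $p$, so $P$ is a finite $p$-group and hence nilpotent. Because the action of $A$ on $G$ is realized by conjugation inside $P$, for a subgroup $H\leq G$ the commutator $[H,A]$ formed in $P$ coincides with the commutator subgroup of $G$ generated by the elements $h^{-1}h^\alpha$, i.e. with the $[H,A]$ appearing in the definition of the $K_n(G,A)$. A straightforward induction then gives $K_n(G,A)\leq\gamma_{n+1}(P)$: indeed $K_1(G,A)=[G,A]\leq[P,P]=\gamma_2(P)$, and $K_{n}(G,A)=[K_{n-1}(G,A),A]\leq[\gamma_n(P),P]=\gamma_{n+1}(P)$. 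Since $P$ is nilpotent, $\gamma_{n+1}(P)=1$ for some $n$, whence $K_n(G,A)=1$ and $G$ is $A$-nilpotent. (Alternatively one could intersect an upper central series of $P$ with $G$ to exhibit an $A$-stabilized chain and invoke Theorem \ref{th3.1}.)

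For the necessity I would argue by contradiction using coprime action. Suppose $G$ is $A$-nilpotent, so $K_n(G,A)=1$ for some $n$, but that some prime $q\neq p$ divides $|A|$. By Cauchy's theorem $A$ contains an element $\alpha$ of order $q$; set $Q=\langle\alpha\rangle$. Since $Q\leq A$, one checks inductively that $K_m(G,Q)\leq K_m(G,A)$ for all $m$, so $K_n(G,Q)=1$, that is $[G,\underbrace{Q,\dots,Q}_{n}]=1$. Now $Q$ is a $q$-group acting on the $p$-group $G$, so the action is coprime; as $G$ is solvable the standard coprime identity $[G,Q]=[[G,Q],Q]$ holds, and iterating it yields $[G,Q]=[G,Q,\dots,Q]=1$. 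Thus $Q$ acts trivially on $G$, i.e. $Q\leq C_A(G)$; but $A$ acts faithfully, so $C_A(G)=1$ and $Q=1$, contradicting $|Q|=q$. Hence no prime other than $p$ divides $|A|$, and $A$ is a $p$-group.

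The routine parts are the commutator bookkeeping inside $P$ and the inclusion $K_m(G,Q)\leq K_m(G,A)$. The one genuinely external ingredient, and the step I expect to be the main obstacle, is the coprime action identity $[G,Q]=[[G,Q],Q]$ (equivalently $G=[G,Q]C_G(Q)$ with $[G,Q]$ being $Q$-perfect); I would justify it from the coprimality of $|Q|$ and $|G|$ together with the solvability of the $p$-group $G$, citing a standard reference such as \cite{s8}. Everything else is self-contained.
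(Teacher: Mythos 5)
Your proof is correct, and both halves of it take genuinely different routes from the paper's. For sufficiency ($A=\mathrm{Aut}_RG$ a $p$-group implies $R$-nilpotence), the paper never forms a semidirect product: it counts the maximal subgroups of $G$ (their number $(p^n-1)/(p-1)$ is coprime to $p$), so the $p$-group $A$ fixes one; iterating this gives an $R$-composition series of $G$ with factors of order $p$, on each of which $R$ acts trivially because $\mathrm{Aut}(Z_p)\simeq Z_{p-1}$ is a $p'$-group, and then $K_i(G,R)\leq G_i$ descends along the series. Your argument --- $P=G\rtimes A$ is a finite $p$-group, hence nilpotent, and $K_n(G,A)\leq\gamma_{n+1}(P)$ --- is shorter and needs only the nilpotence of finite $p$-groups, but it forgoes the paper's intermediate steps (existence of $A$-admissible maximal subgroups and of an $A$-composition series with simple factors), which the paper isolates as facts of independent use, and which it reuses in the proof of Theorem \ref{t2.1}. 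For necessity, the paper observes that $R$ stabilizes the chain $1=K_n(G,R)<\dots<K_0(G,R)=G$ and invokes the stability-group theorem quoted from Doerk and Hawkes \cite{s8} to conclude that $R/C_R(G)$ is a $p$-group; you instead pick a hypothetical $q$-subgroup $Q\leq A$ with $q\neq p$ and use the coprime commutator identity $[G,Q]=[G,Q,Q]$ (legitimate here, since the $p$-group $G$ is solvable and $(|Q|,|G|)=1$) to force $[G,Q]=1$, contradicting faithfulness of $A$. So each direction trades one classical citation for another: your sufficiency is more elementary and compact than the paper's constructive one, while your necessity replaces a one-line appeal to the stability theorem by a coprime-action argument that in effect re-proves the special case of Kaloujnine--Hall needed here.
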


\begin{proof} $(a)$  \emph{If a $p$-group $G$ is $R$-nilpotent, then $\mathrm{Aut}_RG$ is a $p$-group.}

Note that $R$ stabilizes the chain of subgroups \[1=K_n(G, R)<K_{n-1}(G, R)<\dots<K_1(G, R)<K_0(G, R)=G.\] By \cite[Corollary 12.4A(a)]{s8},  $\mathrm{Aut}_RG= R/C_R(G)$ is a $p$-group.

$(b)$ \emph{Let $P$ be a $p$-group of operators of a $p$-group  $G$. Then $G$ has a $P$-admissible  maximal subgroup.}

 Note that maximal subgroups of $G$ are in one-to-one correspondence with maximal subgroups of $G/\Phi(G)$. It is well known that $G/\Phi(G)$ is isomorphic to the direct product of $n$ copies  of $Z_p$ for some natural $n$. So the number of maximal subgroups of $G/\Phi(G)$ is equal to the number of maximal subspaces of a vector space of dimension $n$ over $\mathbb{F}_p$. It is known (for example see \cite{amm}) that this number $k=(p^n-1)/(p-1)$. So the number  of maximal subgroups of $G$ is coprime to $p$. Note that if $M$ is a maximal subgroup of $G$, then
    \[M^x=\{m^x| m\in M\}\] is also maximal subgroup of $G$. Hence a $p$-group $P$ acts  on the set of maximal subgroups of $G$.    From $(k, p)=1$ it follows that $P$  has a fixed point  on it. It means that there exists a maximal subgroup $M$ of $G$ with $M^x=M$ for all $x\in P$.

 $(c)$ \emph{Let $P$ be a group of operators of a $p$-group  $G$. If $\mathrm{\mathrm{Aut}}_PG$ is a $p$-group, then $G$ has   $P$-composition series with simple factors.}

Note that the actions of $P$ and $\mathrm{Aut}_PG$ on $G$ are the same.   From $(b)$  it follows that every $P$-admissible  subgroup $M$ of $G$ has maximal $P$-admissible subgroup $N$. Hence  $G$ has   $P$-composition series with simple factors.

$(d)$  \emph{Let $P$ be a group of operators of a $p$-group  $G$. If $\mathrm{Aut}_RG$   is a $p$-group, then $G$ is $R$-nilpotent. }

 By $(c)$ $G$ has   $R$-composition series with simple factors. Let \[1=G_n<G_{n-1}<\dots<G_1<G_0=G\] be this series. Note that $G=K_0(G, R)\leq G$. Assume that we show that $K_i(G, R)\leq G_i$ for some $i$. Let show that  $K_{i+1}(G, R)\leq G_{i+1}$. Note that the order of $\mathrm{Aut}G_i/G_{i+1}\simeq \mathrm{Aut}Z_p\simeq Z_{p-1}$ is coprime to $p$. Hence $R$ acts trivially on  $G_i/G_{i+1}$, i.e. \[(gG_{i+1})^{-1}(gG_{i+1})^\alpha=[g, \alpha]G_{i+1}=G_{i+1}\]
for all $g\in G_i$ and $\alpha\in R$.
So  $[g, \alpha]\in G_{i+1}$   for all $g\in G_i$ and $\alpha\in R$. It means that
\[K_{i+1}(G, R)=[K_i(G, R), R]\leq [G_i, R]\leq G_{i+1}.\]
Thus $K_n(G, R)\leq G_n=1$. Hence $G$ is $R$-nilpotent.
\end{proof}

It is clear that $L_n(G, R)\subseteq L_{n+1}(G, R)$. Since $G$ is finite, we see that there is a natural $n$ such that $L_n(G)=L_{n+i}(G)$ for all $i>1$.  In this case let $L_n(G, R)=L_\infty(G, R)$. Note that if $R=\mathrm{Inn}G$, then $L_\infty(G, R)=\mathrm{Z}_\infty(G)$ the hypercenter of $G$, and if $R=\mathrm{Aut}G$, then $L_\infty(G, R)=L_\infty(G)$ the absolute hypercenter of $G$ .

%\begin{cor} Let $G$ be a $p$-group. If $\mathrm{Aut}G$ is a $p$-group and $G$ has   $\mathrm{Aut}G$-composition series with cyclic factors then $G$ is autonilpotent.\end{cor}

\begin{thm}\label{t2.1}Let   $R$ be a group of operators for  a group $G$ with $\mathrm{Inn}G\leq\mathrm{Aut}_RG$ and   $g$ be a $p$-element of   $G$. Then $g\in L_\infty(G, R)$ if and only if $g^\alpha=g$ for every $p'$-element $\alpha$ of $R$.      \end{thm}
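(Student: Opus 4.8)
The plan is to reduce the statement to the classical theorem of Baer \cite{h1} by passing to a suitable semidirect product. First I would replace $R$ by $A=\mathrm{Aut}_RG=R/C_R(G)$: since the actions of $R$ and $A$ on $G$ coincide we have $L_\infty(G,R)=L_\infty(G,A)$, and since $R$ is finite every $p'$-element of $A$ is the image of a $p'$-element of $R$ (split a lift into its commuting $p$- and $p'$-parts and use that the $p$-part must map to $1$). Hence the condition ``$g^\alpha=g$ for all $p'$-elements $\alpha$ of $R$'' is equivalent to ``$g^\beta=g$ for all $p'$-elements $\beta$ of $A$''. Note also that $\mathrm{Inn}G\le A$ gives $\mathrm{Inn}G\le N_{\mathrm{Aut}G}(A)$, so Lemma \ref{l1} is available. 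Then I would form the finite group $\Gamma=G\rtimes A$, in which $G\trianglelefteq\Gamma$ and, for $x\in G$ and $a\in A$, the group commutator $[x,a]=x^{-1}x^a$ agrees with the operator commutator.

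The heart of the argument is the identity $L_n(G,A)=Z_n(\Gamma)\cap G$, which I would prove by induction on $n$. For $x\in G$ one has $x\in Z_n(\Gamma)$ iff $[x,\gamma]\in Z_{n-1}(\Gamma)$ for all $\gamma\in\Gamma$, and since $G$ is normal $[x,\gamma]\in G$, so by induction this reads $[x,\gamma]\in L_{n-1}(G,A)$ for all $\gamma\in\Gamma$. The subgroup $L_{n-1}(G,A)$ is normal in $G$ by Lemma \ref{l1}, and it is $A$-invariant because a computation as in Section 2 gives $[y,\alpha_1,\dots,\alpha_{n-1}]^a=[y^a,\alpha_1^a,\dots,\alpha_{n-1}^a]$ for $a\in A$ (valid since $A$ is a group, so each $\alpha_i^a\in A$); hence $L_{n-1}(G,A)\trianglelefteq\Gamma$. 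Using $[x,uv]=[x,v][x,u]^v$ together with this normality, the condition $[x,\gamma]\in L_{n-1}(G,A)$ for all $\gamma\in\Gamma$ is equivalent to its validity for $\gamma$ running over the generating set $G\cup A$ of $\Gamma$. For $\gamma=a\in A$ this is exactly $x\in L_n(G,A)$; for $\gamma=g'\in G$ it is automatic, because $\mathrm{Inn}G\le A$ gives $[x,g']=x^{-1}x^{g'}=[x,\widehat{g'}]$ with $\widehat{g'}\in\mathrm{Inn}G\le A$, so this case is already covered by the $A$-generators. This yields $Z_n(\Gamma)\cap G=L_n(G,A)$ for all $n$, and hence $Z_\infty(\Gamma)\cap G=L_\infty(G,A)$.

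Next I would translate the fixed-point condition. If $w$ is a $p'$-element of $\Gamma$, write $w=g'a$ with $g'\in G$, $a\in A$; then conjugation by $w$ on $G$ equals the automorphism $\sigma_w=\widehat{g'}a\in A$ (again using $\mathrm{Inn}G\le A$), which is a $p'$-element of $A$ as the image of $w$ under the conjugation homomorphism $\Gamma\to A$. Thus $g^w=g^{\sigma_w}$, so $g$ centralizes every $p'$-element of $\Gamma$ iff $g$ is fixed by every $p'$-element of $A$; the forward direction here is immediate since $A\le\Gamma$. Finally, $g$ is a $p$-element of the finite group $\Gamma$, so Baer's theorem gives $g\in Z_\infty(\Gamma)$ iff $g$ commutes with all $p'$-elements of $\Gamma$. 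Chaining the three equivalences—$g\in L_\infty(G,R)=L_\infty(G,A)$ iff $g\in Z_\infty(\Gamma)$, Baer's theorem, and the $\sigma_w$ translation—yields the claim.

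I expect the main obstacle to be the inductive identity $L_n(G,A)=Z_n(\Gamma)\cap G$: one must verify carefully that $L_{n-1}(G,A)$ is normal in the whole of $\Gamma$ (not merely in $G$), and that the hypothesis $\mathrm{Inn}G\le\mathrm{Aut}_RG$ is exactly what collapses the $\Gamma$-generator condition to an $A$-generator condition and what places each $\sigma_w$ inside $A$. Once these two uses of $\mathrm{Inn}G\le A$ are secured, the fixed-point translation and the appeal to Baer's theorem are routine.
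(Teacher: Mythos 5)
Your proof is correct, but it takes a genuinely different route from the paper's. You reduce the statement to Baer's classical theorem by forming the semidirect product $\Gamma=G\rtimes A$ with $A=\mathrm{Aut}_RG$ and proving the bridge identity $Z_n(\Gamma)\cap G=L_n(G,A)$; the delicate points are all handled correctly --- lifting $p'$-elements through $R\to A$, the normality of $L_{n-1}(G,A)$ in all of $\Gamma$ (normal in $G$ by Lemma \ref{l1}, $A$-invariant by the conjugation identity for iterated commutators), and the two uses of $\mathrm{Inn}G\le A$, first to collapse the $\Gamma$-generator condition to an $A$-generator condition and second to place each induced automorphism $\sigma_w$ inside $A$. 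The paper argues quite differently and self-containedly: the forward direction is an induction along the series $L_k(G,R)$, using that $L_\infty(G,R)$ is nilpotent (so $[g,\alpha]$ is a $p$-element) together with the coprime-order computation $g=g^{\alpha^m}=g[g,\alpha]^m$; the converse constructs the set $G_p$ of elements fixed by all $p'$-elements of $R$, shows it is an $R$-admissible subgroup whose $p'$-elements are central, so its Sylow $p$-subgroup $P$ is characteristic and $R$-admissible with $\mathrm{Aut}_RP$ a $p$-group, and then applies Theorem \ref{t1} to get $P\le L_\infty(G,R)$. What each approach buys: yours is shorter and conceptually transparent, exhibiting the absolute hypercenter as the ordinary hypercenter of a larger group intersected with $G$ --- an identity of independent interest --- but it consumes Baer's theorem as an input; the paper's proof avoids Baer entirely, which matters for its presentation, because the paper immediately derives Baer's theorem as a corollary of this very statement (taking $R=\mathrm{Inn}G$, so that $L_\infty(G,R)=\mathrm{Z}_\infty(G)$). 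If your proof were substituted, that corollary would become a restatement of an assumed input rather than an independent derivation, and Theorem \ref{t1}, which the paper's proof exercises at this point, would go unused here.
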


%\begin{thm}\label{t2}Let $g$ be a $p$-element of a group $G$. Then $g\in L_\infty(G)$ if and only if $g^\alpha=g$ for every $p'$-element $\alpha$ of $\mathrm{Aut}G$.      \end{thm}

\begin{proof} From $\mathrm{Inn}G\leq\mathrm{Aut}_RG$ it follows that  $L_\infty(G, R)$ is nilpotent. Let $g$ be a $p$-element of $L_\infty(G, R)$. Note that if $g\in L_1(G, R)$, then  $[g, \alpha]=1$ or $g=g^\alpha$ for all    $p'$-elements $\alpha$ of $R$. Assume that if $g\in L_k(G, R)$, then  $g=g^\alpha$ for all    $p'$-elements $\alpha$ of $R$. Let show that if $g\in L_{k+1}(G, R)$, then  $g=g^\alpha$ for all    $p'$-elements $\alpha$ of $R$.

Since $L_\infty(G, R)$ is nilpotent, $[g, \alpha]$ is a $p$-element for all  $\alpha\in R$. Assume now that $\alpha\in R$ is a $p'$-element. From $g\in L_{k+1}(G, R)$ it follows that $[g, \alpha]\in L_k(G, R)$. By induction $[g, \alpha]^\alpha=[g, \alpha]$. Let $m$ be the order of $\alpha$.   From $g^\alpha=g[g, \alpha]$ it follows that $g=g^{\alpha^m}=g[g, \alpha]^m$ or $[g, \alpha]^m=1$. Since $[g, \alpha]$ is a $p$-element and $(p, m)=1$, we see that $[g, \alpha]=1$ or $g=g^\alpha$. Thus if $g\in L_{k+1}(G, R)$, then  $g=g^\alpha$ for all    $p'$-elements $\alpha$ of $R$.

From $L_\infty(G, R)=L_n(G, R)$ for some natural $n$ it follows that if $g$ is a $p$-element of  $L_\infty(G, R)$, then  $g^\alpha=g$ for every $p'$-element $\alpha$ of $R$.

Let $G_p$ be the set of all elements of $G$ such that   $g^\alpha=g$ for every $p'$-element $\alpha$ of $R$ and every $g\in G_p$. Note that if $x, y\in G_p$, then $xy\in G_p$. Hence $G_p$ is a subgroup of $G$. Let $g\in G_p$, $\alpha, \beta\in R$ and $\beta$ is be a $p'$-element. Then $\beta^{\alpha^{-1}}$ is a $p'$-element too. Hence
\[(g^\alpha)^\beta=g^{\alpha\beta\alpha^{-1}\alpha}=g^{\beta^{\alpha^{-1}}\alpha}=g^\alpha.\]
It means that $g^\alpha\in G_p$. Thus $G_p$ is a $R$-admissible  subgroup of $G$. Let $x$ be a $p'$-element of $G_p$. From $\mathrm{Inn}G\leq\mathrm{Aut}_RG$ it follows that
\[\alpha_x: g\rightarrow g^x\]
is a $p'$-element of $R$. Hence it acts trivially on $G_p$. It means that $x\leq \mathrm{Z}(G_p)$. Let $P$ be a Sylow $p$-subgroup of $G_p$. Then $P\trianglelefteq G_p$. It means that all $p$-elements of $G_p$ form a subgroup. So $P char G_p$. Hence $P$ is $R$-admissible. Now $\mathrm{Aut}_RP$ is a $p$-group.     It means that $P$ is $R$-nilpotent by Theorem \ref{t1}. So $K_n(P, R)=1$ for some natural $n$. Thus $[g, \alpha_1,\dots, \alpha_n]=1$ for any $g\in P$ and $\alpha_1,\dots, \alpha_n\in R$. Therefore $P\leq L_\infty(G, R)$.\end{proof}

\begin{cor}[Baer \cite{h1}] Let $p$ be a prime and $G$ be a group. Then a $p$-element $g$ of $G$ belongs to $\mathrm{Z}_\infty(G)$ if and only if it permutes with all $p'$-elements of $G$.        \end{cor}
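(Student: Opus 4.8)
The plan is to obtain this result as the special case $R=\mathrm{Inn}G$ of Theorem~\ref{t2.1}. First I would check that the hypothesis of that theorem is met. Taking $R=\mathrm{Inn}G$ acting on $G$ itself, the kernel $C_R(G)$ consists of those inner automorphisms fixing every element of $G$, which is the trivial subgroup; hence $\mathrm{Aut}_RG=R/C_R(G)=\mathrm{Inn}G$ and the condition $\mathrm{Inn}G\leq\mathrm{Aut}_RG$ holds (in fact with equality). As recorded just before Theorem~\ref{t2.1}, for this choice $L_\infty(G,R)=\mathrm{Z}_\infty(G)$. Thus Theorem~\ref{t2.1} specializes to the statement that a $p$-element $g$ lies in $\mathrm{Z}_\infty(G)$ if and only if $g^\alpha=g$ for every $p'$-element $\alpha$ of $\mathrm{Inn}G$.

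Next I would translate this fixed-point condition on $\mathrm{Inn}G$ into the permutability condition on $G$. Writing $\alpha_x$ for conjugation by $x\in G$, the equality $g^{\alpha_x}=g$ is equivalent to $gx=xg$. So what must be shown is that $g^\alpha=g$ for every $p'$-element $\alpha$ of $\mathrm{Inn}G$ exactly when $g$ permutes with every $p'$-element of $G$.

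One implication is immediate: if $x$ is a $p'$-element of $G$, then $\alpha_x$ has $p'$-order, since its order divides that of $x$; so the fixed-point condition gives $g^{\alpha_x}=g$, i.e. $g$ commutes with $x$. The converse rests on the one point I expect to be the main obstacle, namely that every $p'$-element of $\mathrm{Inn}G\cong G/\mathrm{Z}(G)$ is induced by a $p'$-element of $G$. To settle it I would take a representative $y$ with $\alpha=\alpha_y$ and use the decomposition $y=y_py_{p'}$ into its commuting $p$- and $p'$-parts. Since the image of $y$ in $G/\mathrm{Z}(G)$ has order prime to $p$, its $p$-part $\alpha_{y_p}$ is trivial, so $y_p\in\mathrm{Z}(G)$ and therefore $\alpha=\alpha_{y_p}\alpha_{y_{p'}}=\alpha_{y_{p'}}$ with $y_{p'}$ a $p'$-element of $G$. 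Granting this, if $g$ permutes with every $p'$-element of $G$ then $g^\alpha=g^{\alpha_{y_{p'}}}=g$, which completes the equivalence and hence the corollary.
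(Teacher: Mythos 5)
Your proposal is correct and takes essentially the same route as the paper: the corollary is stated there without proof precisely because it is the specialization of Theorem \ref{t2.1} to $R=\mathrm{Inn}G$, using the identification $L_\infty(G,\mathrm{Inn}G)=\mathrm{Z}_\infty(G)$ recorded just before that theorem. Your additional argument that every $p'$-element of $\mathrm{Inn}G$ is induced by a $p'$-element of $G$ (via the commuting $p$-part/$p'$-part decomposition, with $y_p$ forced into $\mathrm{Z}(G)$) correctly fills in the translation step that the paper leaves implicit.
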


\begin{thm}\label{t3.4}Let   $R$ be a group of operators for  a group $G$ with $\mathrm{Inn}G\leq\mathrm{Aut}_RG$. Then $G$ is $R$-nilpotent if and only if $\mathrm{Aut}_RP$ is a $p$-group for all $p$-subgroups $P$ of $G$ and   $p\in\pi(G)$.      \end{thm}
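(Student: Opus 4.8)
The plan is to reduce this criterion to the ordinary Frobenius $p$-nilpotency criterion and Theorem \ref{t1}, using the Sylow decomposition of a nilpotent group as the connecting device. The shared mechanism is this: once $G$ is known to be nilpotent it is the direct product $G=\prod_{p\in\pi(G)}G_p$ of its Sylow subgroups, each $G_p$ being characteristic and hence $R$-admissible; writing $g=\prod_p g_p$ and using $R$-admissibility of the factors one gets $[g,\alpha]=\prod_p[g_p,\alpha]$ for every $\alpha\in R$, so that $K_n(G,R)=\prod_p K_n(G_p,R)$ for all $n$. Thus $G$ is $R$-nilpotent if and only if each $G_p$ is, which by Theorem \ref{t1} happens exactly when each $\mathrm{Aut}_RG_p$ is a $p$-group. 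What differs between the two directions is only the way nilpotency of $G$ is secured and the passage between $\mathrm{Aut}_RP$ for a general $p$-subgroup $P$ and $\mathrm{Aut}_RG_p$ for the full Sylow subgroup.

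For necessity, assume $G$ is $R$-nilpotent. Because $R$ and $\mathrm{Aut}_RG$ act identically on $G$ and $\mathrm{Inn}G\leq\mathrm{Aut}_RG$, one has $[H,\mathrm{Inn}G]\leq[H,R]$ for every subgroup $H$, whence $\gamma_{n+1}(G)=K_n(G,\mathrm{Inn}G)\leq K_n(G,R)$; thus $G$ is nilpotent and the decomposition above applies. Each $G_p$ is then $R$-nilpotent, so $\mathrm{Aut}_RG_p=R/C_R(G_p)$ is a $p$-group by Theorem \ref{t1}. For an arbitrary $p$-subgroup $P$, nilpotency forces $P\leq G_p$; since $C_R(G_p)\trianglelefteq R$ and $C_R(G_p)\leq C_R(P)\leq N_R(P)$, the group $\mathrm{Aut}_RP=N_R(P)/C_R(P)$ is a quotient of the subgroup $N_R(P)/C_R(G_p)$ of the $p$-group $R/C_R(G_p)$, hence is itself a $p$-group.

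For sufficiency, assume $\mathrm{Aut}_RP$ is a $p$-group for every $p$-subgroup $P$ and every $p\in\pi(G)$. Here nilpotency is obtained from Frobenius: for a $p$-subgroup $P$, any inner automorphism $\iota_x$ of $G$ normalizing $P$ is, via $\mathrm{Inn}G\leq\mathrm{Aut}_RG$, induced by some $a\in R$; this $a$ then normalizes $P$ and restricts to the same automorphism of $P$ as $\iota_x$, so $N_G(P)/C_G(P)=\mathrm{Aut}_{\mathrm{Inn}G}(P)$ embeds into $\mathrm{Aut}_RP$ and is a $p$-group. The Frobenius criterion then gives that $G$ is nilpotent, so the decomposition applies; taking $P=G_p$ in the hypothesis makes each $\mathrm{Aut}_RG_p$ a $p$-group, and the shared mechanism yields $R$-nilpotency of $G$.

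The step I expect to be the main obstacle is deducing nilpotency in the sufficiency direction, since the hypothesis constrains only the $R$-induced groups $\mathrm{Aut}_RP$ whereas Frobenius is stated for $N_G(P)/C_G(P)$. The bridge is the containment $\mathrm{Aut}_{\mathrm{Inn}G}(P)\leq\mathrm{Aut}_RP$, which relies squarely on $\mathrm{Inn}G\leq\mathrm{Aut}_RG$ and on $R$ and $\mathrm{Aut}_RG$ acting identically on $G$; the one point demanding care is checking that an element of $R$ which induces a prescribed inner automorphism of $G$ simultaneously normalizes $P$ and induces the correct automorphism on $P$. Everything after nilpotency and the Sylow decomposition is a routine application of Theorem \ref{t1} to the $R$-admissible Sylow subgroups.
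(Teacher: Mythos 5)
Your proof is correct, but it takes a genuinely different route from the paper's. The paper derives both directions of Theorem \ref{t3.4} from its Baer-type result, Theorem \ref{t2.1}: for necessity it uses Lemma \ref{l1} to write $G=L_\infty(G,R)$ and then Theorem \ref{t2.1} to conclude that every $p'$-element of $R$ fixes every $p$-element, whence $\mathrm{Aut}_RP$ is a $p$-group; for sufficiency it proves nilpotency by hand via a Schmidt (minimal non-nilpotent) subgroup $S$ --- a $q'$-element $x\notin C_S(Q)$ yields, through $\mathrm{Inn}G\leq\mathrm{Aut}_RG$, a nontrivial $q'$-element of $\mathrm{Aut}_RQ$ --- and then applies Theorem \ref{t2.1} again to place each Sylow subgroup inside $L_\infty(G,R)$, finishing with Lemma \ref{l1}. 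You bypass Theorem \ref{t2.1} entirely: your engine is the factorization $K_n(G,R)=\prod_p K_n(G_p,R)$ over the characteristic (hence $R$-admissible) Sylow subgroups of a nilpotent $G$, combined with Theorem \ref{t1}; and you obtain nilpotency in the sufficiency direction by citing the Frobenius criterion quoted in the paper's introduction, via the embedding $N_G(P)/C_G(P)\hookrightarrow\mathrm{Aut}_RP$, rather than re-running the Schmidt-subgroup argument (your embedding is the same bridge the paper uses implicitly with its $\alpha_x$, and the standard proof of Frobenius's criterion itself goes through Schmidt subgroups, so the two are close relatives). Each approach buys something: the paper's version factors all the work through Theorem \ref{t2.1}, which is one of its headline results anyway, so Theorem \ref{t3.4} comes almost for free once that is in place; yours is independent of the hypercenter machinery, makes the reduction to Sylow subgroups explicit (indeed your argument essentially proves Theorem \ref{t3.5} at the same time), and your necessity direction --- comparison with the lower central series to get nilpotency, then observing that $\mathrm{Aut}_RP=N_R(P)/C_R(P)$ is a quotient of the subgroup $N_R(P)/C_R(G_p)$ of the $p$-group $R/C_R(G_p)$ --- is more elementary than the paper's appeal to Theorem \ref{t2.1}. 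The steps you flagged as delicate do check out: the $K_n$-factorization follows by induction because the $G_p$-components of any element and any commutator stay in the $R$-admissible factors, and an element $a\in R$ inducing $\iota_x$ on $G$ indeed normalizes $P$ and restricts to conjugation by $x$ on $P$, since $R$ and $\mathrm{Aut}_RG$ act identically on $G$.
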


%\begin{thm}Let $G$ be an autonilpotent group. Then  $G$ has   $\mathrm{Aut}G$-composition series with simple factors. \end{thm}

   %Let $M$ be a subgroup of an autonilpotent group $G$ with $K_i(G)\leq M\leq K_{i-1}(G)$ for some natural $i$. Let show that $M$ is a characteristic subgroup of $G$. Not that $m^{-1}m^\alpha\leq K_i(G)\leq M$ for all $m\in M$ and $\alpha\in \mathrm{Aut}G$. Hence $m^\alpha\in M$  for all $m\in M$ and $\alpha\in \mathrm{Aut}G$. Thus   $M$ is a characteristic subgroup of $G$.    \end{proof}

%\begin{thm}\label{t3}  A group $G$ is autonilpotent if and only if $N_{\mathrm{Aut}G}(P)/C_{\mathrm{Aut}G}(P)$ is a $p$-group for every  $p$-subgroup $P$ of $G$ and every $p\in\pi(G)$.     \end{thm}

 \begin{proof} Assume that a group $G$ is $R$-nilpotent. Then $G=L_\infty(G, R)$ by Lemma \ref{l1}. Let  $P$ be a $p$-subgroup of $G$. Then $g=g^\alpha$ for all    $p'$-elements $\alpha$ of $R$ and all $g\in P$ by Theorem \ref{t2.1}. It means that  $\mathrm{Aut}_RP=N_{R}(P)/C_{R}(P)$ is a $p$-group.

   Assume that $\mathrm{Aut}_RP$ is a $p$-group for every  $p$-subgroup $P$ of $G$ and every $p\in\pi(G)$. Suppose that $G$ is non-nilpotent. So there is a Schmidt subgroup $S$ of $G$. Then $S$ has a normal $q$-subgroup $Q$ for some prime $q$ and there is a $q'$-element $x$ of $S$ with $x\not\in C_S(Q)$.  Since
\[\alpha_x: g\rightarrow g^x\]
is    a non-identity  inner automorphism of $G$ of  $q'$-order and $\mathrm{Inn}G\leq\mathrm{Aut}_RG$, $\mathrm{Aut}_RQ$ is not a $q$-group for a $q$-subgroup $Q$, a contradiction.

   Hence $G$ is nilpotent.     Let $P$ be a Sylow $p$-subgroup of $G$. Then $P$ contains all $p$-elements of $G$ and $P char G$. Since    $\mathrm{Aut}_RP=R/C_R(P)$ is a $p$-group,  $g=g^\alpha$ for all    $p'$-elements $\alpha$ of $R$ and all $g\in P$. From Theorem \ref{t2.1} it follows that $P\leq L_\infty(G, R)$. Hence $G\leq L_\infty(G, R)$.  Therefore $G= L_\infty(G, R)$. Thus $G$ is $R$-nilpotent by Lemma \ref{l1}.  \end{proof}

\begin{thm}\label{t3.5}
Let   $R$ be a group of operators for  a group $G$ with $\mathrm{Inn}G\leq\mathrm{Aut}_RG$. Then $G$ is   $R$-nilpotent if and only if it is  the direct product of its Sylow subgroups and $\mathrm{Aut}_RP$ is a $p$-group for every Sylow $p$-subgroup $P$ of $G$ and all   $p\in\pi(G)$.\end{thm}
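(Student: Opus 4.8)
The plan is to deduce this result directly from Theorem \ref{t3.4}, together with the observation (used at the start of the proof of Theorem \ref{t2.1}) that $L_\infty(G,R)$ is nilpotent whenever $\mathrm{Inn}G\le\mathrm{Aut}_RG$. The organizing remark is that a finite group is nilpotent if and only if it is the direct product of its Sylow subgroups, so the phrase ``$G$ is the direct product of its Sylow subgroups'' in the statement is nothing but the nilpotency of $G$. Thus the content of the theorem is that, under $\mathrm{Inn}G\le\mathrm{Aut}_RG$, the full hypothesis of Theorem \ref{t3.4} (the condition for \emph{all} $p$-subgroups) can be traded for nilpotency of $G$ plus the condition on Sylow subgroups only.

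For the forward direction I would argue as follows. Suppose $G$ is $R$-nilpotent. By Lemma \ref{l1}(2) we have $G=L_\infty(G,R)$, and since $\mathrm{Inn}G\le\mathrm{Aut}_RG$ the subgroup $L_\infty(G,R)$ is nilpotent; hence $G$ is nilpotent, i.e.\ the direct product of its Sylow subgroups. Moreover, Theorem \ref{t3.4} guarantees that $\mathrm{Aut}_RP$ is a $p$-group for \emph{every} $p$-subgroup $P$, and in particular for every Sylow $p$-subgroup, so both conditions of the statement hold.

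For the reverse direction, assume $G$ is the direct product of its Sylow subgroups and that $\mathrm{Aut}_RP$ is a $p$-group for each Sylow $p$-subgroup $P$. Since $G$ is nilpotent, each Sylow $p$-subgroup $P$ is characteristic in $G$, hence $R$-admissible, so that $\mathrm{Aut}_RP=R/C_R(P)$. As this quotient is a $p$-group, every $p'$-element $\alpha$ of $R$ lies in $C_R(P)$ and therefore fixes each $g\in P$. Each element of $P$ is a $p$-element, so Theorem \ref{t2.1} yields $P\le L_\infty(G,R)$. Because $L_\infty(G,R)$ is a subgroup by Lemma \ref{l1}(1) and $G$ is generated by its Sylow subgroups, we get $G=L_\infty(G,R)$, and Lemma \ref{l1}(2) gives that $G$ is $R$-nilpotent.

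The argument is essentially a repackaging of the final paragraph of the proof of Theorem \ref{t3.4}: once $G$ is known to be nilpotent, the Schmidt-subgroup step of that proof is no longer needed, and the Sylow condition alone suffices through Theorem \ref{t2.1}. I do not expect a genuine obstacle here; the only point requiring care is the transition from the Sylow-only hypothesis to $R$-nilpotency, which relies on nilpotency of $G$ to make the Sylow subgroups characteristic and hence $R$-admissible with $\mathrm{Aut}_RP=R/C_R(P)$, so that Theorem \ref{t2.1} becomes applicable.
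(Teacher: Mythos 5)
Your proposal is correct and takes essentially the same route as the paper: the forward direction deduces nilpotency of $G$ from $\mathrm{Inn}G\le\mathrm{Aut}_RG$ and quotes Theorem \ref{t3.4} for the Sylow condition, while your converse (characteristic Sylow subgroups, $\mathrm{Aut}_RP=R/C_R(P)$ a $p$-group, Theorem \ref{t2.1} giving $P\le L_\infty(G,R)$, then Lemma \ref{l1}) is precisely the final paragraph of the paper's proof of Theorem \ref{t3.4}, which the paper's own proof of this theorem simply cites verbatim. No gaps.
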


\begin{proof}Assume that $G$ is $R$-nilpotent. From  $\mathrm{Inn}G\leq\mathrm{Aut}_RG$ it follows that $G$ is nilpotent. Hence it is the direct product of its Sylow subgroups. Note that $\mathrm{Aut}_RP$ is a $p$-group for every Sylow $p$-subgroup $P$ of $G$ and all $p\in\pi(G)$ by Theorem \ref{t3.4}.

The proof of the converse statement is the same as in the end of proof of Theorem \ref{t3.4}.
\end{proof}

\begin{proof}[Proof of Theorem \ref{th1}]
The automorphism group of a direct product of groups was described in \cite{adp}. In particular, if $G=P\times H$, where $P$ is a Sylow subgroup of $G$, then     $\mathrm{Aut}G=\mathrm{Aut}P\times\mathrm{Aut}H$ and   $\mathrm{Aut}_{\mathrm{Aut}G}P=\mathrm{Aut}P$. Now Theorem \ref{th1} directly follows from Theorem \ref{t3.5}.
\end{proof}

  \section*{Final Remarks}

  In \cite{i2, i3} it was shown that if $G$ has has $A$-composition series with prime indexes then $A$ is supersoluble.   Shemetkov \cite{i3} and Schmid \cite{i4} studied $\mathfrak{F}$-stable groups of automorphisms for  a (solubly) saturated formation $\mathfrak{F}$.

%\begin{pr}
 % Assume that $A$ is $\mathfrak{F}$-stable group of automorphisms of a group $G$.    What can be said about the structure of $G$?
%\end{pr}

Let $\mathfrak{F}$ be a class of groups, $R$ be a group of automorphisms of a group $G$ and
 $H/K$ be a $R$-composition factor  of $G$. We shall call $H/K$ $R$-$\mathfrak{F}$-central if
$$H/K\leftthreetimes \mathrm{Aut}_RH/K\in\mathfrak{F}.$$
Hence if $R=\mathrm{Inn}G$, then   $R$-$\mathfrak{F}$-central factor is just $\mathfrak{F}$-central.

\begin{definition}
  We shall call a group $G$ auto-$\mathfrak{F}$-group if every $\mathrm{Aut}G$-composition factor of $G$ is $\mathrm{Aut}G$-$\mathfrak{F}$-central.
\end{definition}

\begin{pr}
  Describe the class of all autosupersoluble groups.
\end{pr}

\begin{pr}
  Describe the class of all auto-$\mathfrak{F}$-groups, where $\mathfrak{F}$ is a hereditary saturated formation.
\end{pr}

%\begin{ex} Let show that a group
%\[G=\langle a, b| b^3=aba^{-3}b^{-1}a^2=a^2(ab^{-1})^2a^{-2}(a^{-1}b)^2=
%  (ba)^2(b^{-1}a^{-1})^3ba=\]

 %%\[ a^{-1}b(a^{-1}bab^{-1})^2ab^{-1}=(a^{-1}bab)^3=a^4bab^{-1}a^3b^{-1}ab=1 \rangle\]
%is autonilpotent with the help of GAP.

%\end{ex}

\subsection*{Acknowledgments}

I am grateful to A.\,F. Vasil'ev for helpful discussions.


\begin{thebibliography}{20}

\leftskip=-7mm
\itemindent=10mm
\parskip=-0mm
\parsep=0mm
\itemsep=0mm
\labelwidth=-5mm

\bibitem{i0}
L. Kaloujnine, ber gewisse Beziehungen zwischen einer Gruppe und ihren Auto-
morphismen, Berliner Mathematische Tagung, (1953)  164--172.


\bibitem{i1}
P. Hall, Some sufficient conditions for a group to be nilpotent, Illinois J. Math. \textbf{2}(4) (1968) 787—801.


\bibitem{a0} M. R. R. Moghaddam and M. A. Rostamyari, On autonilpotent groups, in Fifth
International Group Theory Conference, Islamic Azad University, Mashhad Branch,
13–15 March (2013), pp. 169–172.

\bibitem{a1}
S. Davoudirad, M.\,R.\,R. Moghaddam and M.\,A  Rostamyari, Autonilpotent groups and their properties,
Asian-European Journal of Mathematics,  \textbf{9}(2) (2016) DOI: 10.1142/S179355711650056X.

\bibitem{a2} M. M. Nasrabadi and A. Gholamiam, On A-nilpotent abelian groups, Proc. Indian
Acad. Sci. (Math. Sci.) \textbf{124}(4) (2014) 517–525.

\bibitem{a3} M.\,J. Curran, Automorphisms of certain $p$-groups ($p$ odd), Bull. Austral. Math. Soc.  \textbf{38} (1988)  299-305.

\bibitem{h0}
 Groups, Algorithms, and Programming (GAP), Version 4.8.7. http://www.gap-system.org (2017).



\bibitem{h1}
R. Baer.  Group Elements of Prime Power Index, Trans. Amer. Math Soc.  \textbf{75}(1)  (1953) 20--47.

\bibitem{is} I. Martin Isaacs, Finite group theory (Graduate studies in mathematics, V. 92, 2008).

\bibitem{s8}  K. Doerk and  T. Hawkes, Finite soluble groups (Walter de Gruyter, 1992).


\bibitem{amm}
J. Konvalina, A Unified Interpretation of the Binomial Coefficients, the Stirling Numbers, and the
Gaussian Coefficients, The American Mathematical Monthly,  \textbf{107}(10) (2000) 901-910.

\bibitem{adp}
J. N. S. Bidwell, M. J. Curran and D. J. McCaughan,
Automorphisms of direct products of finite groups,
Arch. Math. \textbf{86} (2006) 481--489.

\bibitem{i2}
  B. Huppert, Normalteiler und maximale Untergruppen endlicher Gruppen, Math. Z.
\textbf{60}(4) (1954) 409—434.

\bibitem{i3}
 L.\,A. Semetkov, Graduated formations of groups,
Mathematics of the USSR-Sbornik.  \textbf{23}(4) (1974) 593-611.

\bibitem{i4}
P. Schmid, Lokale Formationen endllcher Gruppen, Math. Z. \textbf{137}(1) (1974)  31-48.


\end{thebibliography}
\end{document}